\documentclass[reqno]{amsart}
\usepackage{amssymb, latexsym,amsmath,amsthm,enumerate}
\usepackage{orcidlink}
\usepackage[mathscr]{eucal}
\usepackage{framed,color,graphicx}
\usepackage{mathrsfs}
\usepackage[all]{xy} 
\usepackage{tikz}
\usetikzlibrary{positioning}

\usepackage{appendix}

\DeclareSymbolFont{bbold}{U}{bbold}{m}{n}
\DeclareSymbolFontAlphabet{\mathbbold}{bbold}

\theoremstyle{plain} 
\newtheorem{thm}{Theorem}
\newtheorem{lem}[thm]{Lemma}

\usepackage{thmtools}
\usepackage{thm-restate}

\theoremstyle{definition}

\newtheorem{remark}[thm]{Remark}

\newcommand{\varmem}{\textsf{varmem}}

\newcommand{\up}[1]{\textup{#1}}

\newcommand{\join}{\mathbin{\ooalign{\hfil$\sqcup$\hfil\cr\hfil\raise0.28ex\hbox{\tiny\textup{+}}\hfil\cr}}}

\makeatletter
\newcommand*\bigcdot{\mathpalette\bigcdot@{.75}}
\newcommand*\bigcdot@[2]{\mathbin{\vcenter{\hbox{\scalebox{#2}{$\m@th#1\bullet$}}}}}
\makeatother

\begin{document}

\title[]{Even harder pseudovariety membership problem}

\author{Marcel Jackson \orcidlink{0000-0002-8149-1141}}
\address{Department of Mathematical and Physical Sciences, La Trobe University, Victoria  3086,
Australia} \email{m.g.Jackson@latrobe.edu.au}

\subjclass[2020]{Primary: 20M07}
\keywords{Pseudovariety, finite basis problem, $\mathscr{J}$-trivial, Brandt monoid, Difference-P}

\begin{abstract}
We give a finite semigroup for which the membership problem for the generated pseudovariety is hard for Difference P.
\end{abstract}

\maketitle

For a finite monoid (or semigroup) ${\bf M}$, let $\varmem({\bf M})$ denote the problem of deciding, when given a finite monoid ${\bf S}$ (or semigroup), whether ${\bf S}$ lies in the pseudovariety $\mathbb{V}({\bf M})$ generated by ${\bf M}$.  
In this note, we derive a new level of hardness for the problem $\varmem({\bf M})$ for monoids, with all arguments holding in either the monoid or semigroup signature.  Throughout, hardness reductions will be logspace many-one reductions.

Let ${\bf J}_k$ denote the monoid of all order-preserving transformations $f$ of the $k+1$-element chain $0\leq 1\leq\dots\leq k$ satisfying $f(i)\geq i$ for all $i$.  
Let ${\bf B}_2^1$ denote the six-element Brandt monoid.  
The first finite monoids with intractable $\varmem$ problem were given in Jackson and McKenzie~\cite{jacmck} (\texttt{NP}-hard).  
More recently Jackson~\cite{jac:B21} showed that $\varmem({\bf B}_2^1)$ is \texttt{NP}-hard
while Kl\'{\i}ma, Kunc and Pol\'ak showed that $\varmem({\bf J}_k)$ is co-\texttt{NP}-complete, for each $k\geq 4$.  
\begin{thm}\label{thm:DP}
For $k\geq 4$ the problem $\varmem({\bf B}_2^1\times {\bf J}_k)$ is \texttt{DP}-hard.
\end{thm}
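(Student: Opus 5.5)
The plan is to reduce from a canonical \texttt{DP}-complete problem: given a pair $(\Phi,\Psi)$ where $\Phi$ is an instance of an \texttt{NP}-complete problem and $\Psi$ an instance of a co-\texttt{NP}-complete problem, decide whether both are yes-instances. Take $\Phi$ from the \texttt{NP}-hardness reduction for $\varmem({\bf B}_2^1)$ in \cite{jac:B21}, and $\Psi$ from the co-\texttt{NP}-hardness reduction for $\varmem({\bf J}_k)$ of Kl\'{\i}ma, Kunc and Pol\'ak. In logspace these give finite monoids ${\bf S}_\Phi$ with ${\bf S}_\Phi\in\mathbb{V}({\bf B}_2^1)$ iff $\Phi$ is a yes-instance, and ${\bf T}_\Psi$ with ${\bf T}_\Psi\in\mathbb{V}({\bf J}_k)$ iff $\Psi$ is a yes-instance. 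The candidate reduction outputs ${\bf S}_\Phi\times{\bf T}_\Psi$, or, if the product is too large to write in logspace, a disjoint-union-style ``0-direct union'' or similar small gluing of the two. The target claim is
\[
{\bf S}_\Phi\times{\bf T}_\Psi\in\mathbb{V}({\bf B}_2^1\times{\bf J}_k)\iff {\bf S}_\Phi\in\mathbb{V}({\bf B}_2^1)\text{ and }{\bf T}_\Psi\in\mathbb{V}({\bf J}_k).
\]
Note that $\mathbb{V}({\bf B}_2^1\times{\bf J}_k)=\mathbb{V}({\bf B}_2^1)\vee\mathbb{V}({\bf J}_k)$.

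The ``if'' direction is immediate. For ``only if'', I need the two components to be separated by the join. Here is where I would exploit structure. ${\bf J}_k$ is $\mathscr{J}$-trivial and aperiodic. ${\bf B}_2^1$ is not $\mathscr{J}$-trivial, but it satisfies identities such as $x^2\approx x^3$ and $xyx\approx xyxyx$-type laws, and its pseudovariety is in $\mathbf{DS}\cap\mathbf{A}$ while ${\bf J}_k$ lies in $\mathbf{J}$. So the plan is to choose the instances so that membership of the product in the join forces each factor into the corresponding generator's pseudovariety.

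For ${\bf T}_\Psi$, I would first check whether the KKP reduction produces monoids that are always $\mathscr{J}$-trivial. If so, use that $\mathbb{V}({\bf B}_2^1)\vee\mathbb{V}({\bf J}_k)$ intersected with $\mathbf{J}$ equals $\mathbb{V}({\bf J}_k)$ for $k\ge 4$. This holds because $\mathscr{J}$-trivial members of $\mathbb{V}({\bf B}_2^1)$ lie in $\mathbf{J}_1$, which is generated by the two-element semilattice and already contained in $\mathbb{V}({\bf J}_k)$. This needs justification, e.g.\ via identities: ${\bf T}$ divides ${\bf A}\times{\bf C}$ with ${\bf A}\in\mathbb{V}({\bf B}_2^1)$ and ${\bf C}\in\mathbb{V}({\bf J}_k)$. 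Replacing ${\bf A}$ by a $\mathscr{J}$-trivial quotient-like image is the subtle point. I would instead argue with identities: find, for each identity $u\approx v$ of ${\bf J}_k$ failing in ${\bf T}$, a consequence that fails in ${\bf T}$ but holds in ${\bf B}_2^1$, using that ${\bf T}$ is $\mathscr{J}$-trivial and satisfies $(xy)^\omega x=(xy)^\omega=y(xy)^\omega$.

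For ${\bf S}_\Phi$, I would use the symmetric idea. The \texttt{NP}-hardness constructions for ${\bf B}_2^1$ typically produce monoids where the obstruction is a group-free, completely regular-ish configuration. Membership of ${\bf S}_\Phi$ in the join should reduce to membership in $\mathbb{V}({\bf B}_2^1)$ because ${\bf J}_k$ contributes only $\mathscr{J}$-trivial behaviour, and the relevant identities of ${\bf B}_2^1$ constrain the regular $\mathscr{J}$-classes. Concretely, I would show that for an identity $u\approx v$ with ${\bf B}_2^1\models u\approx v$ that fails in ${\bf S}_\Phi$, one can substitute words of the form $w^{\omega}$ (large powers) to obtain an identity that still holds in ${\bf B}_2^1$ and also holds in ${\bf J}_k$, since ${\bf J}_k$ collapses $w^\omega$-expressions. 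The new identity must still fail in ${\bf S}_\Phi$, which requires the failure to be witnessed by regular elements. I would aim to ensure this by inspecting the construction of \cite{jac:B21}, possibly after modifying ${\bf S}_\Phi$ in logspace, e.g.\ by adjoining structure, so that failures are witnessed in regular $\mathscr{J}$-classes.

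The main obstacle is this separation step: proving that the join does not accept more products than the two components independently allow. I expect the $\mathscr{J}$-trivial side to be the delicate part, since $\mathbb{V}({\bf B}_2^1)$ contains nontrivial $\mathscr{J}$-trivial monoids beyond semilattices (e.g.\ ${\bf J}_1$-like and small ${\bf J}_2$-like ones). The bound $k\ge4$, and whether ${\bf T}_\Psi$ fails only identities of ${\bf J}_k$ that ${\bf B}_2^1$ also fails, would have to be checked carefully against the KKP construction.
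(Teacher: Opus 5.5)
\textbf{There is a genuine gap in the forward implication.} Your architecture matches the paper's: reduce from SAT-UNSAT, output the direct product of the Jackson monoid and the Kl\'{\i}ma--Kunc--Pol\'ak monoid, and observe that the backward implication is immediate. The product of two polynomial-size tables is logspace computable, so no gluing is needed.

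The forward implication is the entire content, and here your argument does not go through. In general ${\bf S}\times{\bf T}\in\mathcal{U}\vee\mathcal{W}$ does not force ${\bf S}\in\mathcal{U}$ and ${\bf T}\in\mathcal{W}$. You therefore need versions of the two hardness results that survive joins, and you do not supply them.

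\emph{The $\mathscr{J}$-trivial side.} Your stated justification is false. The submonoid $\{1,a,0\}$ of ${\bf B}_2^1$ (with $a^2=0$) is $\mathscr{J}$-trivial but not a semilattice, so $\mathscr{J}$-trivial members of $\mathbb{V}({\bf B}_2^1)$ need not lie in $\mathbf{J}_1$. Moreover, passing from $\mathbb{V}({\bf B}_2^1)\cap\mathbf{J}$ to $(\mathbb{V}({\bf B}_2^1)\vee\mathbb{V}({\bf J}_k))\cap\mathbf{J}$ presumes that intersecting with $\mathbf{J}$ commutes with joins. That is exactly the ``subtle point'' you leave open.

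The paper needs no such general statement. When $J$ is satisfiable, the KKP argument produces a specific identity ${\bf v}_J\approx x{\bf v}_J$, with ${\bf v}_J$ beginning with $x^{k-2}$. This identity holds in ${\bf J}_k$, because both sides have the same length-$k$ scattered subwords, but it fails in $C_J$. Since ${\bf B}_2^1\models x^2\approx x^3$ and $k-2\geq 2$, the same identity also holds in ${\bf B}_2^1$. Hence it holds throughout $\mathbb{V}({\bf B}_2^1\times{\bf J}_k)$, which excludes $C_J$ and therefore $B_I\times C_J$. This is where $k\geq 4$ is used; Lemma~\ref{lem:KKP} states the same thing for any periodic $\mathcal{V}$ of small index by using $x^p{\bf v}_J$ instead.

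\emph{The ${\bf B}_2^1$ side.} You offer only a heuristic: $\omega$-power substitutions, possibly after modifying ${\bf S}_\Phi$. You give no concrete identity and no check that it holds in ${\bf J}_k$ while still failing in ${\bf S}_\Phi$. The paper instead relies on the fact that Jackson's result is already join-robust (Lemma~\ref{lem:jac}, i.e.\ Jackson's Theorem~7.3). For every pseudovariety $\mathcal{V}$ not containing ${\bf B}_2^1$, $B_I\in\mathcal{V}\vee\mathbb{V}({\bf B}_2^1)$ if and only if $I$ is satisfiable. Take $\mathcal{V}=\mathbb{V}({\bf J}_k)$, which lies in $\mathbf{J}$ and so cannot contain the non-$\mathscr{J}$-trivial ${\bf B}_2^1$. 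This gives the required separation for $B_I$.

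Without these two join-stable forms of the hardness results, your proposal does not establish the forward direction.
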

The complexity class \texttt{DP} (``difference \texttt{P}'') is the second level of the Boolean hierarchy (BH), consisting of problems that are the intersection of a problem in \texttt{NP} and a problem in co-\texttt{NP}.  It contains both \texttt{NP} and co-\texttt{NP}.

Let $I$ be an instance of 3SAT. \ In Jackson~\cite{jac:B21}, a monoid $B_I$ is constructed that lies in $\mathbb{V}({\bf B}_2^1)$ if and only if $I$ is satisfiable; the monoids $B_I$ are extensions of ${\bf B}_2^1$.
(Formally, the reduction in~\cite{jac:B21} is from positive NAE3SAT, but 3SAT reduces to positive NAE3SAT so it is simpler in the present case to start at 3SAT.)
Kl\'{\i}ma, Kunc, Pol\'ak~\cite{KKP} construct a nilpotent monoid $C_I$ that lies in $\mathbb{V}({\bf J}_k)$ if and only if $I$ is \emph{not} satisfiable.   
In both of these results, slightly more can be deduced.
\begin{lem}\label{lem:jac}\up(Jackson~\cite[Theorem~7.3]{jac:B21}.\up)
If $\mathcal{V}$ is a pseudovariety not containing~${\bf B}_2^1$, then $B_I\in\mathcal{V}\vee\mathbb{V}({\bf B}_2^1)$ if and only if $I$ is a YES instance of 3SAT.
\end{lem}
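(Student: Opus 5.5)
The forward direction is immediate. If $I$ is a YES instance then $B_I\in\mathbb{V}({\bf B}_2^1)\subseteq\mathcal{V}\vee\mathbb{V}({\bf B}_2^1)$ by the main reduction of Jackson~\cite{jac:B21}. So all the work is in showing that when $I$ is unsatisfiable, $B_I\notin\mathcal{V}\vee\mathbb{V}({\bf B}_2^1)$. I would argue via (pseudo)identities. A finite monoid lies in $\mathcal{V}\vee\mathbb{V}({\bf B}_2^1)$ iff it satisfies every pseudoidentity satisfied by both $\mathcal{V}$ and ${\bf B}_2^1$. So it suffices to produce, for unsatisfiable $I$, a pseudoidentity that holds in ${\bf B}_2^1$ and in $\mathcal{V}$ but fails in $B_I$.

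The first step is to use the hypothesis ${\bf B}_2^1\notin\mathcal{V}$. By Reiterman's theorem there is a pseudoidentity $\pi\approx\rho$ in $\mathcal{V}$ failing in ${\bf B}_2^1$. Since ${\bf B}_2^1$ is generated by its two elements $a=e_{12}$, $b=e_{21}$, I would take $\pi,\rho$ to be binary, say in $x,y$, with an evaluation $x\mapsto a$, $y\mapsto b$ separating them. I would then normalise it (for instance by multiplying through by suitable $\omega$-powers such as $(xy)^\omega$ and $(yx)^\omega$) so that it has a convenient form. In this form one side evaluates to a nonzero element of the Brandt part of ${\bf B}_2^1$, and the other to $0$ or to a different nonzero element. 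I expect something like $(xy)^\omega\approx\sigma(x,y)$ to be achievable, where $\sigma$ "mixes" the roles of $x,y$ in a way ${\bf B}_2^1$ detects.

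The second step revisits the proof in~\cite{jac:B21} that $B_I\notin\mathbb{V}({\bf B}_2^1)$ for unsatisfiable $I$. There, one exhibits an identity $u\approx v$ of ${\bf B}_2^1$ failing in $B_I$. Its failure is witnessed by an assignment in which certain variables take values in the copy of ${\bf B}_2^1$ sitting at the bottom of the extension $B_I$. Unsatisfiability forces any consistent "2-colouring" of the clause structure to break, and this breakage is what separates $u$ from $v$. I would modify $u\approx v$ by substituting, for chosen pairs of letters, the terms $\pi$ and $\rho$ (or splicing them in as factors). The aim is a pseudoidentity $u'\approx v'$ which:
\begin{enumerate}[(i)]
\item still holds in ${\bf B}_2^1$, because it is a consequence of $u\approx v$ or because the inserted material evaluates identically on both sides in ${\bf B}_2^1$;
\item holds in $\mathcal{V}$, because $\pi\approx\rho$ lets one rewrite $u'$ into $v'$ there;
\item still fails in $B_I$, via the same witnessing assignment, extended so that the inserted $\pi$-factors evaluate in $B_I$ as they do in ${\bf B}_2^1$.
\end{enumerate}

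The main obstacle is reconciling (ii) and (iii): the inserted terms must be strong enough that $\mathcal{V}$ collapses the difference, yet harmless on the specific assignment witnessing failure in $B_I$. I would first attempt to exploit the structure of $B_I$ as an extension of ${\bf B}_2^1$, namely that the witnessing evaluation only passes through the Brandt ideal where $\pi$ and $\rho$ genuinely differ. The plan is to arrange the splicing so that in ${\bf B}_2^1$ both sides become $0$, so that (i) holds trivially. Meanwhile in $B_I$ the side coming from $u$ retains a nonzero value that the side coming from $v$ cannot attain. If this direct splicing is too delicate, the fallback is semantic. One shows that $B_I$ is subdirectly irreducible with monolith inside its ${\bf B}_2^1$ part. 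One then shows that any division of $B_I$ into $S\times T$, with $S\in\mathcal{V}$ and $T\in\mathbb{V}({\bf B}_2^1)$, must have the monolith separated by the $T$-projection, since a separation in $S$ would put ${\bf B}_2^1$ in $\mathcal{V}$. From this one deduces $B_I\in\mathbb{V}({\bf B}_2^1)$, contradicting unsatisfiability.
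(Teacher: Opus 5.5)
The paper gives no argument for this lemma; it imports it from Jackson's Theorem~7.3. So everything rests on what you supply, and your proposal does not prove the hard direction. The easy direction (satisfiable $I$ gives $B_I\in\mathbb{V}({\bf B}_2^1)\subseteq\mathcal{V}\vee\mathbb{V}({\bf B}_2^1)$) is fine.

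In your first route, (i)--(iii) are a wish list: no $u'\approx v'$ is ever constructed, and the ``main obstacle'' you flag is the entire content of the lemma. In particular, (ii) cannot come from ``rewriting $u'$ into $v'$ using $\pi\approx\rho$''. Since $\mathcal{V}$ need not satisfy $u\approx v$, the inserted material must make $u'$ and $v'$ coincide in every member of $\mathcal{V}$ outright, while leaving them distinct at the witnessing assignment in $B_I$. Whether that is possible depends on the precise shape of Jackson's witnessing identity and assignment, which you never use.

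The normalisation step is also only half right. For monoids, the usable fact is that a pseudovariety omitting ${\bf B}_2^1$ has no member divisible by ${\bf B}_2$. Hence it lies in $\mathbf{DS}$ and satisfies $((xy)^\omega(yx)^\omega(xy)^\omega)^\omega\approx(xy)^\omega$. In the semigroup signature, however, $a,b$ do not generate ${\bf B}_2^1$, and $\mathcal{V}$ may contain ${\bf B}_2$ (e.g.\ $\mathcal{V}=\mathbb{V}({\bf B}_2)$). Then no pseudoidentity of $\mathcal{V}$ fails at $x\mapsto a$, $y\mapsto b$.

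Your fallback is a J\'onsson's-lemma argument, which needs congruence distributivity, and semigroups lack it. Membership in the join gives a surjection $\phi\colon U\to B_I$ from a subsemigroup $U\le S\times T$. To conclude $B_I\in\mathbb{V}({\bf B}_2^1)$ you need $\ker(\pi_T|_U)\subseteq\ker\phi$. Subdirect irreducibility only tells you that each of $\ker\pi_S\vee\ker\phi$ and $\ker\pi_T\vee\ker\phi$ either equals $\ker\phi$ or contains the pulled-back monolith. The first option for $S$ is excluded by ${\bf B}_2^1\notin\mathcal{V}$, but without distributivity both joins can contain the monolith even though $\ker\pi_S\cap\ker\pi_T$ is trivial. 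Also, ``the monolith is separated by the $T$-projection'' is not well defined, since the monolith pair has many lifts.

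For example, $\mathbb{Z}_2^0$ (the two-element group with a zero adjoined) is subdirectly irreducible. It is the Rees quotient of $\mathbb{Z}_2\times\{0,1\}$ by the ideal $\mathbb{Z}_2\times\{0\}$, so it lies in the join of finite groups and finite semilattices. The lifts $(e,1),(g,1)$ of its monolith pair are separated only by the group coordinate, yet $\mathbb{Z}_2^0$ is not a group. Your reasoning is valid only when $B_I$ embeds in $S\times T$, not when it merely divides it. The further claims that $B_I$ is subdirectly irreducible, and that separation in $S$ would force ${\bf B}_2^1\in\mathcal{V}$, are also unsupported.

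What is missing is a concrete structural property of Jackson's $B_I$, for instance a witness to $B_I\notin\mathbb{V}({\bf B}_2^1)$ that every pseudovariety omitting ${\bf B}_2^1$ also satisfies. That can only be extracted from the details of his construction.
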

The following is a strengthening of Proposition~6.1 of Kl\'{\i}ma, Kunc, Pol\'ak~\cite{KKP}.
\begin{lem}\label{lem:KKP}
For $k\geq 4$, if $\mathcal{V}$ is a periodic pseudovariety with index $i\leq k-1$ and period $p$, then $C_I\in\mathcal{V}\vee\mathbb{V}({\bf J}_k)$ if and only if $I$ is a NO instance of 3SAT.
\end{lem}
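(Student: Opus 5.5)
Two directions are needed. The reverse one is immediate: if $I$ is a NO instance, then by Proposition~6.1 of Kl\'{\i}ma, Kunc and Pol\'ak~\cite{KKP} we have $C_I\in\mathbb{V}({\bf J}_k)\subseteq\mathcal{V}\vee\mathbb{V}({\bf J}_k)$. The real content is the forward direction. Suppose $I$ is satisfiable; I will show $C_I\notin\mathcal{V}\vee\mathbb{V}({\bf J}_k)$. The plan is to use a pseudoidentity (in practice, an identity in the monoid signature) that witnesses the failure in~\cite{KKP}, and to strengthen it so that it also holds in~$\mathcal{V}$.

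\textbf{Step 1: recall the KKP witness.} In~\cite{KKP}, when $I$ has a satisfying assignment, one exhibits an identity $u\approx v$ that holds in ${\bf J}_k$ but fails in $C_I$. Since ${\bf J}_k$ satisfies exactly the identities in which the words have the same subwords (scattered subwords) of length at most $k$, the witness is a pair $u,v$ with the same subwords of length $\le k$. Here $C_I$ is nilpotent and built from the clauses and literals of $I$. A satisfying assignment gives an evaluation into $C_I$ under which $u$ takes a nonzero value and $v$ is sent to $0$ (or to a different element). I would check in~\cite{KKP} that the failing evaluation uses only finitely many letters, each assigned an element of $C_I$.

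\textbf{Step 2: make the identity hold in $\mathcal{V}$ as well.} Since $\mathcal{V}$ has index $i$ and period $p$, it satisfies $x^{i}\approx x^{i+p}$. The natural move is to replace each letter $x$ in $u$ and $v$ by a power $x^{N}$, or to insert such powers at suitable positions. The aim is to obtain words $u',v'$ that are equal modulo the relations $x^{i}=x^{i+p}$, so that $u'\approx v'$ holds in $\mathcal{V}$, while they still have the same subwords of length $\le k$ and still separate in $C_I$.

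A cleaner route is to pick $u',v'$ differing only in exponents of letter-blocks that are at least $i$, with exponents chosen congruent modulo $p$. Because $i\le k-1$, blocks of length $\ge i$ and blocks of length $\ge k$ can be arranged so that:
\begin{itemize}
\item ${\bf J}_k$ cannot distinguish them, since it counts letter occurrences only up to $k$ and exponents $\ge k$ look the same;
\item $\mathcal{V}$ cannot distinguish them, by the congruence modulo $p$ above threshold $i$;
\item $C_I$, being nilpotent, can still distinguish them.
\end{itemize}
Concretely, I expect the KKP construction to separate a product in which some element appears once, or a bounded number of times, from one where it appears more times. The bound $k\ge 4$ together with $i\le k-1$ should leave room to place the distinguishing behaviour below threshold in $C_I$ while staying at or above $i$ for $\mathcal{V}$.

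\textbf{Step 3: conclude.} The identity $u'\approx v'$ holds in both $\mathcal{V}$ and $\mathbb{V}({\bf J}_k)$, hence in their join, but fails in $C_I$. Therefore $C_I\notin\mathcal{V}\vee\mathbb{V}({\bf J}_k)$.

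\textbf{Main obstacle.} The hard part is Step~2: the exponent padding must not destroy the failure in $C_I$. Since $C_I$ is nilpotent, large powers of non-identity elements vanish, so I cannot simply raise the letters assigned nonidentity values. I would instead pad with new letters evaluated to the identity $1$ of the monoid $C_I$, or exploit letters that KKP's evaluation sends to~$1$. Alternatively, I would use the fact that in $\mathcal{V}$ one may replace $u$ by $u$ with inserted factors $w^{i+p}$ versus $w^{i}$ for a word $w$ evaluated to $1$. If that fails, I would reopen the KKP identity itself, likely of the form comparing words in letters $x_1,\dots,x_n$ with occurrence counts at most $k$. I would then verify directly that it already lies in the pseudoidentity theory of every periodic pseudovariety of index $\le k-1$: each letter occurring at most once per relevant segment is arranged so that the $\mathcal{V}$-equivalence $x^{k-1}\approx x^{k-1+p}$ identifies the two sides.
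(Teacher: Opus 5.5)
Your outer frame matches the paper: the reverse direction comes from \cite[Proposition~6.1]{KKP}, and the forward direction needs one identity that holds in $\mathcal{V}$ and in ${\bf J}_k$ but fails in $C_I$. Your ``cleaner route'' (change one block exponent by a multiple of $p$, above the index) also has the right shape. But Step~2, which is the whole content of the lemma, is never carried out, and the concrete mechanisms you propose would not work.

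\begin{itemize}
\item \emph{Padding with fresh letters sent to $1$.} This cannot help. Substituting $1$ for those letters turns $u'\approx v'$ back into $u\approx v$, so $\mathcal{V}\models u'\approx v'$ already forces $\mathcal{V}\models u\approx v$.
\item \emph{The KKP identity already holds in $\mathcal{V}$.} This is false. That identity is ${\bf v}_I\approx x{\bf v}_I$, whose sides differ by one occurrence of $x$. So it fails in a cyclic group of order $p\geq 2$, which $\mathcal{V}$ may contain.
\item \emph{``Exponents $\geq k$ look the same.''} This is not how ${\bf J}_k$ is fooled in the actual witness. $C_I$ separates the $x$-block of length $k-2$ at the front of ${\bf v}_I$ from one of length $k-1$, because every path labelled $x^{k-1}$ in $\mathcal{A}_I$ ends in an absorbing rejecting state. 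If both sides carry that block with exponent at least $k-1$, both are sent to the sink and the separation is gone.
\item \emph{The relation $x^{k-1}\approx x^{k-1+p}$ you invoke at the end.} For the same reason, it only identifies words that $C_I$ cannot separate in this way. What is needed is $x^{k-2}\approx x^{k-2+p}$.
\end{itemize}

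The missing idea is to keep the KKP witness and add copies of $x$ only on the side that is already rejected. For satisfiable $I$, the proof of \cite[Proposition~6.1]{KKP} gives ${\bf v}_I=x^{k-2}{\bf w}$ with ${\bf v}_I\sim_k x{\bf v}_I$, where ${\bf v}_I$ is accepted by $\mathcal{A}_I$ and $x{\bf v}_I$ is rejected. The paper uses the identity ${\bf v}_I\approx x^p{\bf v}_I$.

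\begin{itemize}
\item It holds in $\mathcal{V}$, since it reads $x^{k-2}{\bf w}\approx x^{k-2+p}{\bf w}$, a consequence of $x^{k-2}\approx x^{k-2+p}$. So the argument really uses index at most $k-2$; this is what ${\bf B}_2^1$, of index $2$, provides when $k\geq 4$.
\item It holds in ${\bf J}_k$ because $\sim_k$ is a congruence. From ${\bf v}_I\sim_k x{\bf v}_I$ you get $x^j{\bf v}_I\sim_k x^{j+1}{\bf v}_I$ for all $j$, hence ${\bf v}_I\sim_k x^p{\bf v}_I$. This is an engineered property of ${\bf v}_I$, not a consequence of exponents reaching $k$.
\item It fails in $C_I$ because $x^p{\bf v}_I$ still begins with $x^{k-1}$ and is rejected, while ${\bf v}_I$ is accepted.
\end{itemize}

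This also dissolves your ``main obstacle''. You do raise the power of a non-identity letter, and the collapse of $x^{k-1}$ to the sink is exactly what keeps that side rejected.
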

\begin{proof}
Recall that a \emph{scattered subword} of a word ${\bf v}$ is any subsequence of ${\bf v}$.  
We write ${\bf v}\sim_k{\bf w}$ if the words ${\bf v}$ and ${\bf w}$ have the same length-$k$ scattered subwords.
It is known that ${\bf J}_k\models {\bf v}\approx {\bf w}$ if and only if ${\bf v}\sim_k{\bf w}$  (combining Volkov's~\cite[Theorem~2]{vol} with well known results of Simon~\cite{sim}; see \cite[Proposition~3]{vol} for example).

Let $I$ be an instance of 3SAT.
The monoid $C_I$ is defined as the transition monoid of a constructed finite state automata $\mathcal{A}_I$ for a language built from $I$. 
When~$I$ is a NO instance, then $C_I\in \mathbb{V}({\bf J}_k)\subseteq\mathcal{V}\vee\mathbb{V}({\bf J}_k)$ by \cite[Proposition~6.1]{KKP}.
When~$I$ is a YES instance, the proof of \cite[Proposition~6.1]{KKP} shows that there is a word ${\bf v}_I$ with prefix $x^{k-2}$ such that ${\bf v}_I\sim_k x{\bf v}_I$ but where ${\bf v}_I$ is accepted by~$\mathcal{A}_I$ and $x{\bf v}_I$ is not ($x$ is the letter $c$ in \cite{KKP}).  It follows that ${\bf J}_k\models {\bf v}_I\approx x{\bf v}_I$ but $C_I\not\models {\bf v}_I\approx x{\bf v}_I$, which is why in this situation we have $C_I\notin \mathbb{V}({\bf J}_k)$.  
It is easily verified that the same is true if we replace $x{\bf v}_I$ with $x^p{\bf v}_I$: non-acceptance by $\mathcal{A}_I$ follows trivially as the only $x^{k-1}$ paths lead to an absorbing non-accepting state (and $x^p{\bf v}_I$ has a prefix $x^{k-2+p}$). 
The property ${\bf v}_I\sim_k x^p{\bf v}_I$ follows for identical reason to the case of $p=1$: there are no new $k$-letter scattered subwords obtained by adding further copies of $x$ to the front.  
So both $\mathcal{V}$ (by virtue of $x^{k-2}\approx x^{k-2+p}$) and $\mathbb{V}({\bf J}_k)$ (by virtue of ${\bf v}_I\sim_k x^p{\bf v}_I$) satisfy ${\bf v}_I\approx x^p{\bf v}_I$, which fails on $C_I$.
\end{proof}
\begin{proof}[Proof of Theorem~\ref{thm:DP}]
We have a reduction from the \texttt{DP}-complete problem SAT-UNSAT, where instances consist of a pair of instances $(I,J)$ of SAT (we use 3SAT, which is equivalent), and we wish to know if $I$ is a YES instance and $J$ a NO instance of 3SAT.  We argue that $B_I\times C_J\in \mathbb{V}({\bf B}_2^1\times {\bf J}_k)$ if and only if $(I,J)$ is a YES instance of SAT-UNSAT.

For the backward implication, if $I$ is a YES instance and $J$ a NO instance, then $B_I\in \mathbb{V}({\bf B}_2^1)\subseteq \mathbb{V}({\bf B}_2^1\times {\bf J}_k)$ by Lemma \ref{lem:jac} and $C_J\in \mathbb{V}({\bf J}_k)\subseteq \mathbb{V}({\bf B}_2^1\times {\bf J}_k)$ by Lemma~\ref{lem:KKP}.  

For the forward implication, assume either $I$ is a NO instance or $I$ is a YES instance of 3SAT.  
We use that fact that both $B_I$ and $C_J$ are contained in the variety of $B_I\times C_J$.  
If $I$ is a NO instance, then $B_I\notin  \mathbb{V}({\bf B}_2^1\times {\bf J}_k)$ by Lemma~\ref{lem:jac} because ${\bf J}_k$ is $\mathscr{J}$-trivial; so $B_I\times C_J\notin \mathbb{V}({\bf B}_2^1\times {\bf J}_k)$.
If $J$ is a YES instance, then $C_J\notin \mathbb{V}({\bf B}_2^1\times {\bf J}_k)$ by Lemma~\ref{lem:KKP} as ${\bf B}_2^1$ has index $2$; so $B_I\times C_J\notin \mathbb{V}({\bf B}_2^1\times {\bf J}_k)$.
\end{proof}
\begin{remark}
Volkov~\cite{vol} has shown that ${\bf J}_k$ generates the pseudovariety corresponding to $k$-piecewise testable languages (via syntactic monoids), and that this can also be generated by the monoid of reflexive binary relations on a $(k+1)$-element set (equivalently, $(k+1)\times (k+1)$ Boolean matrices with $1$ on the diagonal), and also by submonoid of upper triangular Boolean matrices with $1$ on the diagonal.  So each of these can replace ${\bf J}_k$ in Theorem~\ref{thm:DP}.
\end{remark}
The proof of Theorem~\ref{thm:DP} does not preclude the possibility that $\varmem({\bf B}_2^1\times {\bf J}_k)$ lies outside $\texttt{DP}$, in some even ``harder'' class.
Amongst general algebras, it is known that $\varmem({\bf A})$ can be as hard as 2-\texttt{exptime}-complete \cite{koz}, and it is tempting to speculate that hardness beyond $\texttt{DP}$ is possible for monoids. 


\end{document}